\def\lf{\left\lfloor}   
\def\rf{\right\rfloor}
\def\snd{\operatorname{snd}}
\newtheorem{theorem}{Theorem}[section]
\newtheorem{example}[theorem]{Example}
\newtheorem{corollary}[theorem]{Corollary}
\title{On small $n$-uniform hypergraphs with positive discrepancy}
\author{Danila Cherkashin\footnote{Chebyshev Laboratory, St. Petersburg State University, 14th Line V.O., 29B, Saint Petersburg 199178 Russia; 
Moscow Institute of Physics and Technology, Lab of advanced combinatorics and network applications, Institutsky lane 9, Dolgoprudny, Moscow region, 141700, Russia;  National Research University Higher School of Economics, Soyuza Pechatnikov str., 16, St. Petersburg, Russian Federation.}, 
Fedor Petrov\footnote{Saint Petersburg State University, Faculty of Mathematics and Mechanics; St.~Petersburg Department of V.~A.~Steklov Institute of Mathematics of the Russian Academy of Sciences.}}
\date{March 2017}
\begin{document}

\maketitle

\begin{abstract}
A two-coloring of the vertices $V$ of the hypergraph $H=(V, E)$ by red and blue has discrepancy
$d$ if $d$ is the largest difference between the number of red and blue points in any edge.
Let $f(n)$ be the fewest number of edges in an $n$-uniform hypergraph without a coloring with discrepancy $0$. 
Erd{\H o}s and S{\' o}s asked: is $f(n)$ unbounded?

N. Alon, D. J. Kleitman, C. Pomerance, M. Saks and P. Seymour~\cite{AKPSS} proved upper and lower 
bounds in terms of the smallest non-divisor ($\snd$) of $n$ (see (\ref{1})). 
We refine the upper bound as follows:
$$f (n) \leq c \log \snd {n}.$$
\end{abstract}

\vskip+0.2cm

{\bf Keywords:}\ hypergraph colorings, hypergraph discrepancy, prescribed matrix determinant.

\section{Introduction}

A hypergraph is a pair $(V, E)$, where $V$ is a finite set whose elements are called vertices and $E$ is a family of subsets of $V$, called edges. 
A hypergraph is $n$-uniform if every edge has size $n$. 
A vertex $2$-coloring of a hypergraph $(V , E)$ is a map $\pi : V \rightarrow \{1, 2\}$.

The \textit{discrepancy} of a coloring is the maximum 
over all edges
of the difference between the number of vertices of 
two colors in the edge. 
The \textit{discrepancy} of a hypergraph is the minimum 
discrepancy of a coloring of this hypergraph.
The general discrepancy theory is set out in~\cite{alon2016probabilistic,matouvsek1999geometric,chen2014panorama}.

Let $f(n)$ be the minimal number of edges in an $n$-uniform hypergraph (all edges have size $n$) having positive discrepancy.
Obviously, if $2 \nmid n$ then $f(n) = 1$; if $2|n$ but $4 \nmid n$ then $f(n) = 3$.
Erd\H{o}s and S\H{o}s asked whether $f(n)$ is
bounded or not. N. Alon, D. J. Kleitman, C. Pomerance, M. Saks and P. Seymour~\cite{AKPSS} proved the following Theorem, showing in particular that $f(n)$ is unbounded.

\begin{theorem}
Let $n$ be an integer such that $4\,|\,n$. Then
\begin{equation}
c_1 \frac{\log \snd (n/2)}{\log \log \snd (n/2)} \leq f(n) \leq c_2 \frac{\log^3 \snd (n/2)}{\log \log \snd (n/2)},
\label{1}
\end{equation}
where $\snd (x)$ stands for the least positive integer that does not divide $x$.
\end{theorem}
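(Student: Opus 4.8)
The plan is to recast ``positive discrepancy'' as the infeasibility of an integer program and then read both bounds off the subdeterminants of the incidence matrix. Colour vertices by $\pm1$; an edge $e$ contributes $0$ to the discrepancy exactly when $\sum_{v\in e}x_v=0$, equivalently, writing $x_v=1-2y_v$ with $y_v\in\{0,1\}$, when $e$ contains exactly $n/2$ vertices with $y_v=1$. Hence $H$ has positive discrepancy iff no $0/1$ labelling makes every edge have exactly $n/2$ ones. Group the vertices of an $m$-edge hypergraph into the $\le 2^m$ atoms $A_S=\{v:\{i:v\in e_i\}=S\}$ for $S\subseteq[m]$, set $a_S=|A_S|$, and let $b_S\in[0,a_S]$ count the ones in $A_S$. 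Balancedness becomes the system $\sum_{S\ni i}b_S=n/2$ for all $i\in[m]$, with $0\le b_S\le a_S$ integral. The ``half colouring'' $b_S=a_S/2$ always solves it over $\mathbb{Q}$, since $\sum_{S\ni i}a_S=|e_i|=n$. So the whole question is the integer-rounding gap of the $0/1$ incidence matrix $M=(\mathbf 1[i\in S])_{i,S}$, and the guiding slogan is that this gap is controlled by $\snd(n/2)$ through the minors of $M$.

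\textbf{Lower bound.} Here I would argue that if the integer system is infeasible while its rational half-solution is feasible, then a prime power must be ``too small'' to divide $n/2$. Concretely, the aim is to show that infeasibility forces a nonsingular square submatrix $B$ of $M$ together with an integral target whose solvability fails modulo $\det B$, so that some prime power dividing $\det B$ does not divide $n/2$; this yields $\snd(n/2)\le |\det B|$. Since $B$ is a $0/1$ matrix of order $\le m$, Hadamard's inequality gives $|\det B|\le (m+1)^{(m+1)/2}2^{-m}=2^{O(m\log m)}$, whence $\log\snd(n/2)=O(m\log m)$ and $m=f(n)=\Omega\!\big(\log\snd(n/2)/\log\log\snd(n/2)\big)$. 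The technical heart, and the place I expect to spend the most care, is the rounding step: making precise which integers are forced to divide $n/2$ when one tries to move the fractional half-solution to an integral lattice point inside the box $\prod_S[0,a_S]$, i.e.\ pinning the obstruction to a single minor of $M$.

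\textbf{Upper bound.} Let $q=\snd(n/2)$; note $q$ is a prime power $p^{k}$ (if $q=ab$ with coprime $a,b<q$ then $a\mid n/2$ and $b\mid n/2$, so $q\mid n/2$, a contradiction), and $v_p(n/2)=k-1$ exactly. The goal is to exhibit a square $0/1$ matrix $M$ of order $m$ with $\det M=\pm q$ such that the integer vector $c=\operatorname{adj}(M)\mathbf 1$ has a coordinate $c_k$ with $p\nmid c_k$, together with positive integers $a_S$ solving $M a=n\mathbf 1$. Then the balancing system has the \emph{unique} rational solution
\[
M^{-1}\Big(\tfrac n2\mathbf 1\Big)=\pm\frac{n/2}{q}\,\operatorname{adj}(M)\,\mathbf 1,
\]
whose $k$-th coordinate has $p$-adic valuation $v_p(n/2)-k+v_p(c_k)=-1<0$, so it is non-integral; uniqueness then rules out every integral solution, and the hypergraph whose atoms are the columns of $M$ with sizes $a_S$ is $n$-uniform with positive discrepancy. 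The task therefore becomes the ``prescribed matrix determinant'' problem advertised in the abstract: realise a given prime power $q$ as the determinant of a small $0/1$ matrix that simultaneously admits a positive uniform weighting and the coprimality of a cofactor. I would build $M$ from the base-$p$ (or base-$2$) digits of $q$ across $O(\log q)$ coordinates and then pad to secure the weighting and coprimality conditions, bounding the resulting order by $m\le c_2\log^{3}\snd(n/2)/\log\log\snd(n/2)$.

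\textbf{Main obstacle.} The two ends pull in opposite directions around the same quantity. For the lower bound the crux is the rounding lemma that localises infeasibility to one minor and thus to $\snd(n/2)$. For the upper bound the crux is the explicit determinant construction: hitting a prescribed prime power with a $0/1$ matrix while keeping it small \emph{and} compatible with a positive $n$-uniform weighting is exactly where the extra polylogarithmic overhead (the gap between the optimal $\log q/\log\log q$ and the attained $\log^{3}q/\log\log q$) arises, so controlling that overhead is the step I expect to be hardest.
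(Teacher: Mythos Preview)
The paper does not itself prove this theorem; it is quoted from \cite{AKPSS}, and only the upper-bound mechanism is sketched there (the class $\mathcal M$, the quantity $t(m)$, and the reduction $f(n)\le t(m)$ for $m$ with $\lfloor n/m\rfloor$ odd). So there is little to compare against line by line, but your upper-bound plan contains a genuine internal inconsistency. You ask for a \emph{square} invertible $0/1$ matrix $M$ with $\det M=\pm q$, a cofactor vector $c=\operatorname{adj}(M)\mathbf 1$ having a coordinate coprime to $p$, and positive integers $a$ with $Ma=n\mathbf 1$. Squareness forces $a=\pm(n/q)\,c$ and hence $b=a/2$; your non-integrality computation gives $v_p(b_j)=-1$ at the distinguished coordinate, and for \emph{odd} $p$ this yields $v_p(a_j)=v_p(2)+v_p(b_j)=-1$ as well, so the atom size $a_j$ is itself not an integer. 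Since $q=\snd(n/2)$ is typically an odd prime power (already $q=3$ whenever $3\nmid n/2$), the three requirements you list are mutually incompatible in the generic case. The device that repairs this in \cite{AKPSS}, and in the paper's own proof of Theorem~\ref{main}, is precisely to abandon the rigid square system: one perturbs the right-hand side by an odd quantity (the role of ``$\lfloor n/m\rfloor$ odd'' in the sketch above, or of the odd $\delta\in(-q,q)$ in Section~3) and adds auxiliary edges, which decouples integrality of the atom sizes from non-integrality of the half-colouring. Your plan is missing this idea.

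Your lower-bound outline points at the right invariant (subdeterminants of the atom incidence matrix, bounded via Hadamard), but you have correctly identified and then not addressed the crux: one must show that if every positive integer up to that Hadamard bound divides $n/2$, then the fractional half-solution can be rounded to an integral one \emph{inside the box} $\prod_S[0,a_S]$. The box constraints are what make this nontrivial, and nothing in the proposal indicates how you would handle them.
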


\noindent To prove the upper bound they introduced several quantities.
Let $\mathcal{M}$ denote the set of all matrices $M$ with entries in $\{0, 1\}$
such that the equation $Mx = e$ has exactly one non-negative solution (here $e$ stands for the vector with all entries equal to $1$). 
This unique solution is denoted $x^M$. Let $z(M)$ be the least integer such that $z(M)x^M$ is integer and let $y^M=z(M)x^M$. 
For each positive integer $n$, let $t(n)$ be the least $r$ such that there exists a matrix $M \in \mathcal{M}$ with $r$
rows such that $z(M)=n$ (obviously, $t (n) \leq n+1$ because $z(J_{n+1}-I_{n+1}) = n$, where $J_{n+1}$ is the $(n+1) \times (n+1)$ matrix with unit entries; $I_{n+1}$ is the $(n+1) \times (n+1)$ identity matrix). 
The upper bound in~(\ref{1}) follows from the inequality $f (n) \leq t(m)$ for such $m$ that $\lf \frac{n}{m} \rf$ is odd.

Then N. Alon and V. H. V{\~u}~\cite{AV} showed that $t(m) \leq (2+o(1)) \frac{\log m}{\log \log m}$ for infinitely many $m$.
However they marked that trueness of inequality $t(m) \leq c \log m$ for arbitrary $m$ is not clear.

Our main result is the following

\begin{theorem}
Let $n$ be a positive integer number. Then 
\begin{equation}
f (n) \leq c \log{ \snd (n)}.
\end{equation}
for some constant $c > 0$.
\label{main}
\end{theorem}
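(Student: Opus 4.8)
The strategy is to combine a short number‑theoretic reduction with a matrix construction. The reduction produces, for every $n$, an integer $m<\snd(n)^{2}$ with $\lfloor n/m\rfloor$ odd, so that the inequality $f(n)\le t(m)$ recalled above (valid whenever $\lfloor n/m\rfloor$ is odd) applies; the matrix construction then bounds $t(m)$ by $O(\log m)=O(\log\snd(n))$.

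First I would record that $\snd(n)$ is always a prime power: if $\snd(n)=ab$ with $\gcd(a,b)=1$ and $1<a,b<\snd(n)$, then $a\mid n$ and $b\mid n$, hence $ab\mid n$, a contradiction. Write $s:=\snd(n)=p^{k}$. If $n$ is odd then $f(n)=1$ and if $n\equiv 2\pmod 4$ then $f(n)=3$, so I may assume $4\mid n$, in particular $s\ge 3$; set $a:=v_{2}(n)\ge 2$. \emph{Case $2^{a}<s$} (this always holds when $p=2$, since then $s=2^{k}$ forces $a=k-1$): take $m:=2^{a}$; as $v_2(n)=a$, the value $\lfloor n/m\rfloor$ is the odd part of $n$, hence odd, and $m<s$. \emph{Case $2^{a}\ge s$} (so $p$ is odd and $s=p^{k}$ is an odd non‑divisor of $n$): put $r:=n\bmod s\in\{1,\dots,s-1\}$. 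Since $r<s\le 2^{a}$ we have $v_{2}(r)<a$, hence $v_{2}(n-r)=v_{2}(r)=:j$ and $j\le\log_{2}r<\log_{2}s$. Then $\lfloor n/s\rfloor=(n-r)/s=2^{j}t$ with $t$ odd, so for $m:=2^{j}s<s^{2}$ one gets $\lfloor n/m\rfloor=\lfloor\lfloor n/s\rfloor/2^{j}\rfloor=t$, odd. In either case $m<s^{2}$ and $\lfloor n/m\rfloor$ is odd, whence $f(n)\le t(m)$ with $\log m<2\log s$.

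It remains to bound $t(m)$ for the moduli that occur — powers of $2$ and products $2^{j}p^{k}$ of a power of $2$ with an odd prime power. For the building blocks I would prove: (a) for every prime power $q$ there is $M\in\mathcal M$ with $O(\log q)$ rows and $z(M)=q$, obtained by assembling a square $\{0,1\}$‑matrix $M$ from the binary expansion of $q$ (a banded, continuant‑type construction) so that $M$ is invertible, the common denominator of the unique solution $M^{-1}e$ of $Mx=e$ equals $q$, and $M^{-1}e>0$; invertibility gives uniqueness of the solution, positivity gives $M\in\mathcal M$, and the denominator condition gives $z(M)=q$; (b) for coprime $u,v$, the block‑diagonal matrix $\operatorname{diag}(M_{u},M_{v})$ satisfies $z(\operatorname{diag}(M_{u},M_{v}))=\operatorname{lcm}(z(M_u),z(M_v))=uv$, so $t(uv)\le t(u)+t(v)$. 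Combining (a) and (b), $t(2^{j}p^{k})\le t(2^{j})+t(p^{k})=O(j)+O(k\log p)=O(\log m)$, and therefore $f(n)\le t(m)=O(\log m)=O(\log\snd(n))$, which is Theorem~\ref{main}.

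The genuine obstacle is step (a): realizing a \emph{prescribed} integer exactly as the quantity $z(M)$ (essentially the determinant) of a $\{0,1\}$‑matrix of size only $O(\log m)$, while at the same time forcing the unique solution $M^{-1}e$ to be strictly positive, as is required for membership in $\mathcal M$. Matching the determinant up to a divisor, or up to a constant factor in the exponent, is comparatively easy; hitting the target on the nose and keeping positivity is the delicate point — this is precisely the estimate $t(m)\le c\log m$ that Alon and V\~u left unresolved. By contrast the reduction above is routine, once one observes that $\snd(n)$ is a prime power and, when it is odd, already furnishes a convenient small odd non‑divisor of $n$.
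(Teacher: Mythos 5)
Your number-theoretic reduction is fine ($\snd(n)$ is indeed a prime power, and your case analysis does produce an $m<\snd(n)^2$ with $\lfloor n/m\rfloor$ odd), but the argument does not close, because step (a) is not a proof — it is a restatement of the open problem. Producing, for every prime power $q$, a $\{0,1\}$-matrix $M\in\mathcal M$ with $O(\log q)$ rows, $z(M)=q$ \emph{exactly}, and $M^{-1}e>0$ is precisely the bound $t(m)\le c\log m$ that Alon and V{\~u} explicitly left unresolved, as you yourself acknowledge in your last paragraph. The phrase ``a banded, continuant-type construction'' carries the entire weight of the theorem, and no mechanism is given for forcing the denominator of $M^{-1}e$ to hit $q$ on the nose while keeping the entries in $\{0,1\}$ and the solution positive. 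As written, the proposal reduces Theorem~\ref{main} to an unproved (and apparently hard) claim, so it is not a proof.

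The paper's point is exactly to sidestep this obstacle by abandoning the $\{0,1\}$-matrix/$t(m)$ framework. Writing $q=\snd(n)=\sum_{i=0}^{m-1}\eta_i2^i$ with $\eta_i\in\{1,2\}$, it uses the $m\times m$ system with rows $v_0=(\eta_0,\dots,\eta_{m-1})$ and $v_i$ obtained from $v_0$ by adding $2$ in coordinate $i-1$ and subtracting $1$ in coordinate $i$; the vector $(1,2,\dots,2^{m-1})$ satisfies $\langle v_i,u\rangle=q$ for all $i$, the entries stay in $\{0,1,2,3,4\}$, and multiplicities larger than $1$ are realized in the hypergraph by taking four disjoint copies $A_i^1,\dots,A_i^4$ of each block. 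Exactness of the right-hand side is also relaxed: one equation is allowed to equal $n+\delta$ for a small odd $\delta$ chosen to make the solution integral, and the at most two edges of wrong size are patched by adding or deleting a few vertices. None of this requires realizing a prescribed determinant by a $\{0,1\}$ matrix of logarithmic size. To salvage your route you would have to supply step (a) in full generality, which at present is not known; alternatively, note that allowing small positive integer entries (equivalently, repeated blocks in the edges) is exactly the extra freedom that makes the construction go through.
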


\begin{corollary}
Let $n$ be a positive integer number. Then 
$$
f (n) \leq c \log \log {n}.
$$
for some constant $c > 0$.
\end{corollary}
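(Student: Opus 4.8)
The plan is to apply the reduction of~\cite{AKPSS} recalled above, namely that $f(n)\le t(m)$ whenever $\lfloor n/m\rfloor$ is odd; thus it suffices to produce, for every $n$, an integer $m$ with $\lfloor n/m\rfloor$ odd and $t(m)=O(\log\snd(n))$. The argument then splits into a number-theoretic step (choosing $m$) and a construction step (bounding $t(m)$).

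For the choice of $m$, write $s=\snd(n)=p^{a}$; the smallest non-divisor of an integer is always a prime power, since if two coprime numbers $u,v<uv$ both divided $n$ then $uv$ would divide $n$ too. Because $1,2,\dots,s-1$ all divide $n$, the exact power of $p$ dividing $n$ is $p^{a-1}$, so we may write $n=p^{a-1}u$ with $p\nmid u$; let $2^{c}$ be the exact power of $2$ dividing $n$ (equivalently dividing $u$, when $p$ is odd). If $p=2$ then $c=a-1$ and $m=2^{a-1}$ works, as $\lfloor n/m\rfloor$ is the odd part of $n$. If $p$ is odd, consider two cases. If $c\le\log_{2}(s-1)$, take $m=2^{c}p^{a-1}<s^{2}$; then $\lfloor n/m\rfloor=u/2^{c}$ is the odd part of $u$. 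If $c>\log_{2}(s-1)$, let $j^{*}$ be the $2$-adic valuation of $u\bmod p$, so $0\le j^{*}\le\log_{2}(p-1)<c$, and take $m=2^{j^{*}}p^{a}<s^{2}$; then $\lfloor n/m\rfloor=\lfloor(u/2^{j^{*}})/p\rfloor$, and since multiplication by $2^{-1}$ modulo $p$ carries an even residue $r$ to $r/2$, the residue $(u/2^{j^{*}})\bmod p$ is the odd part of $u\bmod p$, whence (as $u/2^{j^{*}}$ is even) a one-line parity check shows $\lfloor n/m\rfloor$ is odd. In every case $m<s^{2}$ is a power of $2$ times a power of a single odd prime, so $\log m=O(\log s)$.

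The main work is the construction step: for each prime power $q^{\ell}$ one builds a $\{0,1\}$-matrix $M$ with only $O(\ell\log q)=O(\log q^{\ell})$ rows such that $Mx=e$ has exactly one nonnegative solution and the common denominator of that solution equals $q^{\ell}$; that is, $t(q^{\ell})=O(\log q^{\ell})$. Together with the elementary bound $t(\operatorname{lcm}(d_{1},d_{2}))\le t(d_{1})+t(d_{2})$ (use $M=\operatorname{diag}(M_{1},M_{2})$), and since the $m$ chosen above is $2^{c'}\cdot p^{b}$ with $\gcd(2^{c'},p^{b})=1$, this gives $t(m)\le t(2^{c'})+t(p^{b})=O(\log m)=O(\log\snd(n))$, hence $f(n)\le t(m)=O(\log\snd(n))$, which is Theorem~\ref{main}. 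Proving $t(q^{\ell})=O(\log q^{\ell})$ is exactly the point where the method of Alon and Vu stalls for general moduli, and I expect it to be the main obstacle; the natural approach is a recursion that adjoins a bounded number of rows and columns to pass from a matrix realizing $z=q^{\ell-1}$ to one realizing $z=q^{\ell}$, together with a base gadget realizing $z=q$, checking at each stage that the nonnegative solution stays unique and that its denominator gains exactly one factor of $q$. Finally, the corollary is immediate: $\operatorname{lcm}(1,\dots,\snd(n)-1)$ divides $n$ and is at least $2^{\snd(n)-2}$, so $\snd(n)\le\log_{2}n+2$ and $\log\snd(n)=O(\log\log n)$.
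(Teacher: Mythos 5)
Your closing deduction is exactly the intended proof of the corollary from Theorem~\ref{main}: since $1,2,\dots,\snd(n)-1$ all divide $n$, their least common multiple divides $n$, and $\operatorname{lcm}(1,\dots,k)\ge 2^{k-1}$ gives $n\ge 2^{\snd(n)-2}$, hence $\snd(n)\le \log_2 n+2$ and $\log\snd(n)=O(\log\log n)$. That one sentence, applied to the already-proved bound $f(n)\le c\log\snd(n)$, is a complete and correct proof of the corollary, and it is the argument the paper has in mind.

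The problem is that you do not take Theorem~\ref{main} as given but try to re-derive it, and that re-derivation has a genuine gap at its核心 step. Your number-theoretic choice of $m$ with $\lf n/m\rf$ odd and $m<\snd(n)^2$ checks out, as does the subadditivity $t(\operatorname{lcm}(d_1,d_2))\le t(d_1)+t(d_2)$ via block-diagonal matrices. But everything then rests on $t(q^{\ell})=O(\log q^{\ell})$ for prime powers, which you explicitly leave as "the main obstacle" with only a hoped-for recursion. This is not a routine lemma: the introduction records that Alon and V{\~u} only obtained $t(m)\le(2+o(1))\log m/\log\log m$ for \emph{infinitely many} $m$, and that whether $t(m)\le c\log m$ holds for arbitrary $m$ is unclear. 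So the route through $\{0,1\}$-matrices and $t(m)$ stalls precisely where you expect it to, and no proof is supplied. The paper avoids this entirely: it never constructs a $\{0,1\}$-matrix in $\mathcal{M}$, but instead builds, from the binary-type expansion $q=\sum\eta_i2^i$ with $\eta_i\in\{1,2\}$, an integer matrix with entries of absolute value at most $4$ and determinant $q=\snd(n)$, realizes each coefficient by taking up to four disjoint copies $A_i^j$ of each vertex block, and perturbs two edge sizes by an odd amount $\delta$ to force the parity contradiction. If you want a self-contained proof of the corollary, either cite Theorem~\ref{main} and keep only your last sentence, or you must actually supply the construction behind $t(q^{\ell})=O(\log q^{\ell})$ (or an analogue of the paper's direct construction); as written, the middle of your argument asserts an open statement.
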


The construction of the hypergraph
with positive discrepancy which yields  Theorem \ref{main} 
uses a matrix with determinant $\snd(n)$ and small entries satisfying some additional technical properties. Before coming to a general construction
we give an example with a specific $2\times 2$ matrix which shows
the vague idea.

\section{Example}

\begin{example}
\label{example}
Let us consider the matrix $A = \begin{pmatrix}
3 & 5 \\
1 & 8 
\end{pmatrix}$ and suppose that $n$ is not divisible on $\det A = 19$.
Consider the system 
\begin{equation}
A\begin{pmatrix}
a \\
b
\end{pmatrix} = \begin{pmatrix}
n \\
n + t
\end{pmatrix}.
\label{ex1}
\end{equation}

The solution of the system is $a = (3n-5t)/19$, $b = (2n+3t)/19$, which is integral if and only if $t = 12n$ (mod 19) i. e. $t$ has prescribed residue modulo 19.
Since $n$ is not divisible on $19$, $t$ is not equal to zero modulo $19$.
So one can choose $-19 < t < 19$ such that $t$ has prescribed residue modulo 19 and $t$ is odd. 
Also, assume that $n/8 > t > -2n/3$ which is certainly true if $n > 200$. Then $a$ and $b$ are positive and also $b > t$ and $a$, $b$ tend to infinity simultaneously with $n$.

Let us construct an $n$-uniform hypergraph $H$ with positive discrepancy.
Consider disjoint vertex sets $A_1, A_2, A_3$ of size $a$ and $B_1, \dots, B_8$ of size $b$.
If $t < 0$ then consider a vertex set $T$ of size $|t|$ and set $C := B_1 \cup T$; if $t > 0$ let $T$ be a $t$-vertex subset of $B_1$ and define $C := B_1 \setminus T$.
The edges of $H$ are listed:
$$A_1 \cup A_2 \cup A_3 \cup B_1 \cup B_2 \cup B_3 \cup B_4 \cup B_5$$
$$A_1 \cup A_2 \cup A_3 \cup B_1 \cup B_2 \cup B_3 \cup B_4 \cup B_6$$
$$A_1 \cup A_2 \cup A_3 \cup B_1 \cup B_2 \cup B_3 \cup B_4 \cup B_7$$
$$A_1 \cup A_2 \cup A_3 \cup B_1 \cup B_2 \cup B_3 \cup B_4 \cup B_8$$
$$A_1 \cup A_2 \cup A_3 \cup B_2 \cup B_3 \cup B_4 \cup B_5 \cup B_8$$
$$A_1 \cup A_2 \cup A_3 \cup B_1 \cup B_3 \cup B_4 \cup B_5 \cup B_8$$
$$A_1 \cup A_2 \cup A_3 \cup B_1 \cup B_2 \cup B_4 \cup B_5 \cup B_8$$
$$A_1 \cup A_2 \cup A_3 \cup B_1 \cup B_2 \cup B_3 \cup B_5 \cup B_8$$
$$A_1 \cup C \cup B_2 \cup B_3 \cup B_4 \cup B_5 \cup B_6 \cup B_7 \cup B_8$$
$$A_2 \cup C \cup B_2 \cup B_3 \cup B_4 \cup B_5 \cup B_6 \cup B_7 \cup B_8$$
$$A_3 \cup C \cup B_2 \cup B_3 \cup B_4 \cup B_5 \cup B_6 \cup B_7 \cup B_8.$$

Obviously, if $H$ has a coloring with discrepancy $0$, then $d(B_5) = d(B_6)$, where $d(X)$ is the difference between blue and red vertices in $X$, because the second edge can be reached by replacing $B_5$ on $B_6$ in the first edge. Similarly one can deduce that $d(A_i) = d(A_j)$ and $d(B_i) = d(B_j)$ for all pairs $i$, $j$.
So one can put $k := d(A_i)$, $l := d(B_i)$. Because of the first edge we have $3k + 5l = 0$. Obviously, $k$ and $l$ are odd numbers, so the minimal solution is $k = 5$, $l = -3$ (or $k = -5$, $l = 3$ which is the same because of red-blue symmetry). But then the last edge gives $|k + 8l| \leq |t|$ which contradicts with $|k + 8l| \geq 19 > |t|$.

So we got an example if $19\, \nmid \,n$ and $n > 200$ of an $n$-uniform hypergraph with $11$ edges and positive discrepancy.
\end{example}

The number of edges in this example equals $11=3+8$, the sum of maximal
entries in the columns of $A$. This is essentially (up to multiplicative
constant) the general property of our construction.

\section{Proofs}

\begin{proof}[Proof of Theorem~\ref{main}]
Let us denote $\snd(n)$ by $q$. We should construct a hypergraph with at most $c \log q$
edges and positive discrepancy.
Take $m$ such that $2^{m}-1 \leq q\leq 2^{m+1}-2$. Then 
$$q-(2^m-1)=\sum_{i=0}^{m-1} \varepsilon_i 2^i \ \ \mbox{for some}\ \ \varepsilon_i\in \{0,1\},$$
therefore 
$$q = \sum_{i=0}^{m-1} \eta_i 2^i, \ \ \mbox{where} \ \ \eta_i=1+\varepsilon_i\in \{1,2\}.$$

\noindent Consider $m$ vectors in $\mathbb{Z}^m$: 
$$v_0=(\eta_0,\dots,\eta_{m-1}),$$
$$v_i=(\eta_0,\dots,\eta_{i-2},\eta_{i-1}+2,\eta_{i}-1,\eta_{i+1},\dots,\eta_{m-1}) \ \ \mbox{for} \ \ i=1,\dots,m-1, \mbox{i. e.}$$
$$v_{i,k} = \begin{cases} \eta_k, \ \ k \neq i, i-1 \\ \eta_k-1, \ \ k = i \\ \eta_k + 2, \ \ k = i-1.\end{cases}$$
Note that the vector $u=(1,2,\dots,2^{m-1})$ satisfies a system of linear equations 
$$\langle v_i,u\rangle=q; \ \ i=0,\dots,m-1.$$ 
Assume that $q$ is odd. Choose odd $\delta \in (-q,q)$ such that 
$x_0 := \frac{n+\eta_{m-1}\delta}{q}$ is integer. Define 
$$x_i := 2^i x_0 \ \ \mbox{for} \ \  i=1,\dots,m-2; \ \ \ x_{m-1} := 2^{m-1}x_0-\delta,$$ then the vector
$x=(x_0,\dots,x_{m-1})$ satisfies $\langle v_i,x\rangle=n$ for $i=0,\dots,m-2$, $\langle v_{m-1},x\rangle=n+\delta$.

In the case $q = 2^m \geq 8$ we have $n \equiv 2^{m-1} \pmod q$ and $\eta_0=2,\eta_1=\dots=\eta_{m-1}=1$.

Choose $x=(x_0,\dots,x_{m-1})$ so that $\langle v_1,x\rangle=\langle v_{m-1},x\rangle=n+1$ 
and $\langle v_i,x\rangle=n$ for $i=0,2,3,\dots,m-2$. The solution is given by 
$$x_0 := \frac{n+2^{m-1}}{q}; \ \  x_1 := 2x_0-1;  \ \ x_i := 2^{i-1}x_1 \ \ \mbox{for} \ \  i=2,\dots,m-2; \ \ x_{m-1} :=2^{m-2}x_1-1.$$

Now let us construct a hypergraph in the following way: for $i = 0, \dots m-1$ let us take $4$ sets $A_i^j$ ($j = 1, \dots, 4$) of vertices of size $x_i$ such that all 
$4m$ sets $A_i^j$ are disjoint.
Let the edge $e_0$ be the union of $A_i^j$ over $0 \leq i \leq m-1$ and $1 \leq j \leq \eta_i$.
By the choice of $x_i$ and $\eta_i$ we have $|e_0| = n$.
Then we add an edge 
$$\bigcup_{0 \leq i \leq m-1} \bigcup_{\substack{1 \leq j \leq \eta_i \ \ \mbox{for} \ \ i \neq k \\ j \in R \ \ \mbox{for} \ \ i = k}} A_i^j$$
for every $k$ and for every $R \subset [4]$ such that $|R| = \eta_k$. 
Clearly there are at most $6m$ such edges. Let us say that they form \textit{the first collection of edges}. 
Finally, for every $1 \leq k \leq m-1$ we add the edge 
$$\bigcup_{0 \leq i \leq m-1} \bigcup_{\substack{1 \leq j \leq \eta_i \ \ \mbox{for} \ \ i \neq k, k-1 \\ 1 \leq j \leq \eta_i+2 \ \ \mbox{for} \ \ i = k-1 \\ 1 \leq j \leq \eta_i-1 \ \ \mbox{for} \ \ i = k}} A_i^j,$$ which form \textit{the second collection of edges}.

Summing up we have hypergraph with at most $7m$ edges; at most $2$ of them have size not equal to $n$.
Let us correct these edges in the simplest way: if an edge has size less than $n$ then we add arbitrary vertices;
if an edge has size greater than $n$ then we exclude arbitrary vertices. 

Suppose that our hypergraph has discrepancy $0$, so it has a proper coloring $\pi$. 
For every set $A_i^j$ denote by $d(A_i^j)$ the difference between the numbers of red and blue vertices of $\pi$ in $A_i^j$.
Obviously, $d(A_i^{j_1}) = d(A_i^{j_2})$ because there are edges $e_1$, $e_2$ from the first collection such that $e_2$ can be obtained from $e_1$ by the replacement of $A_i^{j_1}$ to $A_i^{j_2}$.
So we may write $d_i$ instead of $d(A_i^j)$.

If $q$ is odd then the vector $d=(d_0,\dots,d_{m-1})$ satisfies 
$$\langle v_i,d\rangle = 0 \ \ \mbox{for} \ \ i = 0,1,\dots,m-2 \ \ \mbox{and} \ \ \langle v_{m-1},d\rangle=s$$
for some odd $s\in (-q,q)$. 
Considering consequent differences of this equations we get 
$$d_i=2^id_0 \ \ \mbox{for} \ \ i=0,\dots,m-2; \ \ d_{m-1}=2^{m-1}d_0-s; \ \ 0=\sum \eta_i d_i=d_0q-\eta_{m-1}s,$$
but this fails modulo $q$.
A contradiction. In the case $q=2^m$ we get a similar contradiction, as $(2^{m-1}-1)\pm 1$ is not divisible by $2^m$.

Thus we get a hypergraph on at most $7m = O(\log q)$ edges with positive discrepancy, the claim is proven.
\end{proof}

\section{Discussion}

\begin{itemize}

\item  In fact, during the proof we have constructed a matrix of size of $O(\log k)$ with bounded integer coefficients and with determinant $k := \snd (n)$. 
By Hadamard inequality, the determinant $k$ of $m\times m$
matrix with bounded coefficients satisfies 
$k = O(\sqrt{m})^m$, thus $\log k=O(m\log m)$, $m\geq \text{const}\,\cdot \log k/\log \log k$. 
We suppose that actually a matrix of size 
$O(\log k/\log \log k)$ with bounded
integer coefficients and determinant $k$
always exists; and moreover, it may 
be chosen satisfying additional properties 
which allow to replace the main estimate 
with $f(n)\leq c\log \snd (n)/\log\log \snd(n)$ (which asymptotically coincides with the lower bound).

\item It turns out, that for a fixed value of $q = \snd (n)$ and some values of $n$ modulo $q$, a hypergraph, constructions of above type have the discrepancy separated from zero. In particular, in Example~\ref{example} the choice $n \in \{\pm 4, \pm 7\}$ modulo 19 leads to the discrepancy 6.

\item For fixed $r$ and large enough $n$ using Theorem~\ref{main} one can construct an $n$-uniform hypergraph with discrepancy at least $r$ and $O(\ln \snd{[n/r]})^r$ edges (here $[x]$ stands for the nearest integer to $x$), as follows: 
let $H_0$ be a hypergraph realizing $f([n/r])$, $H_1, \dots, H_{2r-1}$ be vertex-disjoint copies of $H$.
Let $V := V(H_1) \sqcup \dots \sqcup V(H_{2r-1})$, $E := \{\sqcup e_{i}\ | \ i \in A \subset [2r-1], |A| = r\}$.
By the construction, every $H_i$ has discrepancy at least $2$; so by pigeonhole principle $(V, E)$ has discrepancy at least $2r$.
Define $l := r[n/r] - n$.
Finally, if $l > 0$, then exclude arbitrary $l$ vertices from every edge $e \in E$; else add arbitrary $l$ vertices to every edge $e \in E$; denote the result by $H$.
By definition $l \leq r$, so the discrepancy of $H$ is at least $r$.
Since $|E(H_i)| = f([n/r])$, we have 
$$|E(H)| = \binom {2r-1}{r} f([n/r])^r = O(\ln \snd{[n/r]})^r \leq O(\ln \ln n)^{r}.$$

\item A.~Raigorodskii independently asked the same question in a more general form: 
he introduced the quantity $m_k(n)$ that is the minimal number of edges in a hypergraph without a vertex 2-coloring such that every edge has at least $k$ blue vertices and at least $k$ red vertices. So $m_k(n)$ is the minimal number of a edges in a hypergraph with discrepancy at least $n - 2k + 2$, in particular $f(n) = m_{n/2} (n)$ for even $n$.

For the history and the best known bounds on $m_k(n)$ see~\cite{teplyakov2013upperENG}. Note that our result replaces the bound 
$m_k(2k + r) = O(\ln k)^{r+1}$~\cite{cherkashin2011twoENG} with $m_k(2k + r) = O(\ln \ln k)^{r+1}$ for a constant $r$.
It worth noting, that in the case $n = O(k)$ the behavior of $m_k(n)$ is completely unclear.

\end{itemize}

\noindent\textbf{Acknowledgements.} 
The work was supported by the Russian Scientific Foundation grant 16-11-10014. The authors are grateful to 
A.~Raigorodskii for the introduction to the problem, to N.~Alon for directing our attention to the paper \cite{AKPSS}
and fruitful discussions and to N. Rastegaev for a very careful reading of the draft of the paper. 

\bibliographystyle{plain}
\bibliography{main}

\end{document}